\numberwithin{equation}{section}
\newtheorem{theorem}{Theorem}[section]
\newtheorem{lemma}[theorem]{Lemma}
\newtheorem{proposition}[theorem]{Proposition}
\newtheorem{corollary}[theorem]{Corollary}
\theoremstyle{definition}
\newtheorem{procedure}[theorem]{Procedure} 
\newtheorem{remark}[theorem]{Remark}
\newtheorem{example}[theorem]{Example}
\begin{document}


\title
{Generalized Hamming weights of projective Reed--Muller-type codes
over graphs}

\thanks{The first and third authors were supported by SNI, Mexico. Second author was
supported by a scholarship from CONACYT, Mexico}

\author[J. Mart\'\i nez-Bernal]{Jos\'e Mart\'\i nez-Bernal}
\address{
Departamento de
Matem\'aticas\\
Centro de Investigaci\'on y de Estudios Avanzados del IPN\\
Apartado Postal
14--740 \\
07000 Mexico City, Mexico.
}
\email{jmb@math.cinvestav.mx}

\author[M. A. Valencia-Bucio]{Miguel A. Valencia-Bucio}
\address{
Departamento de
Matem\'aticas\\
Centro de Investigaci\'on y de Estudios
Avanzados del
IPN\\
Apartado Postal
14--740 \\
07000 Mexico City, Mexico
}
\email{mavalencia@math.cinvestav.mx}

\author[R. H. Villarreal]{Rafael H. Villarreal}
\address{
Departamento de
Matem\'aticas\\
Centro de Investigaci\'on y de Estudios
Avanzados del
IPN\\
Apartado Postal
14--740 \\
07000 Mexico City, Mexico
}
\email{vila@math.cinvestav.mx}

\keywords{Incidence matrices, edge connectivity, generalized Hamming
weights, Reed--Muller-type codes, Graphs, weak edge
biparticity.}
\subjclass[2010]{Primary 13P25; Secondary 94B05, 94C15, 11T71, 05C40.} 
\begin{abstract} 
Let $G$ be a connected graph and let $\mathbb{X}$ be the
set of projective points defined by the column vectors of the
incidence matrix of $G$ over a field $K$ of any characteristic. 
We determine the generalized Hamming weights of the
Reed--Muller-type code over the set $\mathbb{X}$ in terms of graph
theoretic invariants. As an application to coding theory we show that 
 if $G$ is non-bipartite and $K$ is a finite field of ${\rm
 char}(K)\neq 2$, then the 
$r$-th generalized Hamming weight of the linear code generated by the
rows of the incidence matrix of $G$ is the $r$-th weak edge
biparticity of $G$. If ${\rm char}(K)=2$ or $G$ is bipartite, we prove
that the $r$-th generalized Hamming weight of that code is the 
$r$-th edge connectivity of $G$.
\end{abstract}

\maketitle 

\section{Introduction}\label{intro-section}
In this work we study basic parameters of projective Reed--Muller-type codes 
over graphs using an algebraic geometric approach 
via graph theory and commutative algebra, and give some applications 
to linear codes whose generator matrices are incidence matrices of
graphs.  

Let $K$ be a field of characteristic $p\geq 0$, let $G$ be a
connected graph with vertex 
set $V(G)$ and edge
set $E(G)$, and let $t_1,\ldots,t_s$ and $f_1,\ldots,f_m$ be 
the vertices and edges of $G$, respectively. The \textit{incidence
matrix} of $G$, over the field $K$, is the $s\times m$ matrix $A=(a_{ij})$ given by 
$a_{ij}=1$ if $t_i\in f_j$ and $a_{ij}=0$ otherwise. 
The \textit{edge biparticity} of $G$, denoted $\varphi(G)$, is the
minimum number of edges whose removal makes the graph bipartite, and
maybe not connected.  
The $r$-th \textit{weak edge biparticity} of $G$, denoted $\upsilon_r(G)$, is the
minimum number of edges whose removal results in a graph with $r$ 
bipartite components, and maybe some non-bipartite components. If $r=1$, $\upsilon_1(G)$ is the 
\textit{weak edge biparticity} of $G$ and is denoted by $\upsilon(G)$. 
The $r$-th \textit{edge connectivity} of $G$, denoted $\lambda_r(G)$,
is the minimum number of edges whose removal results in a graph with
$r+1$ connected components. If $r=1$, $\lambda_1(G)$ is the
edge connectivity of $G$ and is denoted by $\lambda(G)$. We will use these
invariants to study the minimum distance and the Hamming
weights of Reed-Muller-type codes over graphs. 

The edge biparticity and the edge connectivity are well studied
invariants of a graph
\cite{Har,Zaslavsky}. 
In Section~\ref{RMT-codes-section} we give an algebraic method for computing 
the edge biparticity (Proposition~\ref{saturday-afternoon-dec8-18}). 
For a discussion of computational and algorithmic aspects of edge
bipartization problems we refer to \cite{Pilipczuk-Pilipczuk}. 

The set of columns $\{P_1,\ldots,P_m\}$ of $A$ can be regarded as a
set of points 
$\mathbb{X}=\{[P_1],\ldots,[P_m]\}$ in a
projective space $\mathbb{P}^{s-1}$ over the field $K$. Consider a
polynomial ring $S=K[t_1,\ldots,t_s]=\bigoplus_{d=0}^\infty S_d$ 
over the field $K$ with the standard grading. The 
\textit{vanishing ideal} $I(\mathbb{X})$ of $\mathbb{X}$ is the
graded ideal of $S$ generated by the homogeneous polynomials of $S$ 
that vanish at all points of $\mathbb{X}$. Fix integers $d\geq 1$
and $r\geq 1$. 
The aim of this work is
to determine the following number in terms of the 
combinatorics of the graph $G$:
$$
\delta_\mathbb{X}(d,r):=\min\{|\mathbb{X}\setminus
V_\mathbb{X}(F)|:\, F=\{f_i\}_{i=1}^r\subset S_d,\,
\dim_K(\{\overline{f}_i\}_{i=1}^r)=r\},
$$
where $V_\mathbb{X}(F)$ is the set of zeros or
projective variety of $F$ in $\mathbb{X}$, and
$\overline{f}_i=f_i+I(\mathbb{X})$ is the class of $f_i$ modulo
$I(\mathbb{X})$. This is equivalent to
determine
$$
\text{hyp}_\mathbb{X}(d,r):=\max\{|V_\mathbb{X}(F)|:\, F=\{f_i\}_{i=1}^r\subset S_d,\,
\dim_K(\{\overline{f}_i\}_{i=1}^r)=r\}
$$
because
$\delta_\mathbb{X}(d,r)=|\mathbb{X}|-\text{hyp}_\mathbb{X}(d,r)$. 

A  \textit{projective Reed--Muller-type code} of degree $d$ on 
$\mathbb{X}$ \cite{duursma-renteria-tapia,GRT}, denoted $C_\mathbb{X}(d)$, is the image of the 
following evaluation linear map 
\begin{align*}
\quad {\rm ev}_d\colon S_d\rightarrow K^{m},&\quad f\mapsto
\left(f(P_1),\ldots,f(P_m)\right).
\end{align*}

\quad The motivation to study $\delta_\mathbb{X}(d,r)$ comes from algebraic
coding theory because---over a finite field---the $r$-th generalized Hamming 
weight of the Reed--Muller-type code $C_\mathbb{X}(d)$ of degree $d$ is equal 
to $\delta_\mathbb{X}(d,r)$ \cite[Lemma~4.3(iii)]{rth-footprint}.

Generalized Hamming weights were introduced by Helleseth, Kl{\o}ve and
Mykkeltveit \cite{helleseth,klove} and were first used systematically
by Wei \cite{wei}. For convenience we recall this notion. Let $K=\mathbb{F}_q$ be a finite
field and let $C$ be a 
$[m,k]$ {\it linear
code} of {\it length} $m$ and {\it dimension} $k$, 
that is, $C$ is a linear subspace of $K^m$ with $k=\dim_K(C)$. Let $1\leq r\leq k$ be an integer.  
Given a linear subspace $D$ of $C$, the \textit{support} of
$D$ is the set   
$$
\chi(D):=\{i\,\vert\, \exists\, (a_1,\ldots,a_m)\in D,\, a_i\neq 0\}.
$$
\quad The $r$-th \textit{generalized Hamming weight} of $C$, denoted
$\delta_r(C)$, is given by
$$
\delta_r(C):=\min \{|\chi(D)|\colon D \mbox{ is a subspace of }C,\, \dim_K (D)=r\}.
$$
\quad 
The set $\{\delta_1(C), \ldots, \delta_k(C)\}$ is called the
\textit{weight hierarchy} of the code $C$. The following
\textit{duality} of Wei \cite[Theorem~3]{wei} is a classical result in
this area that shows a strong
relationship between the weight hierarchies of $C$ and its dual
$C^\perp$:
$$
\{\delta_i(C)\,\vert\,i=1,\ldots,k\}=\{1,\ldots,m\}\setminus\{m+1-
\delta_i(C^\perp)\,\vert\, i=1,\ldots,m-k\}.
$$
\quad These numbers are
a natural generalization of the notion of minimum 
distance and they have several applications from cryptography (codes
for wire--tap channels of type II), $t$--resilient functions, trellis
or branch complexity of linear codes, and shortening or puncturing
structure of codes; see
\cite{GHWCartesian,carvalho,ghorpade,geil,rth-footprint,GHW2014,
Pellikaan,Johnsen,olaya,schaathun-willems,tsfasman,
wei,wei-yang,Yang} and the
references therein. If $r=1$, we obtain the
\textit{minimum distance} $\delta(C)$ 
of $C$ which is the most important parameter of a linear code. 
In this paper 
we give combinatorial formulas for the weight hierarchy of
$C_\mathbb{X}(d)$ for $d\geq 1$.   

Our main results are: 

\noindent {\bf Theorems~\ref{pepe-vila-2018},
\ref{pepe-vila-2018-char=2}, \ref{pepe-vila-2018-hybrid}}\textit{ Let $G$ 
be a connected graph with $s$ vertices, $m$ edges, $r$-th weak edge biparticity
$\upsilon_r(G)$, $r$-th edge connectivity  
$\lambda_r(G)$, and let
$A$ be the incidence matrix of $G$ over a field $K$ of characteristic
$p$. If $\mathbb{X}$ is the set of
column vectors of $A$, then  
$$ 
\delta_\mathbb{X}(d,r)=\delta_r(C_\mathbb{X}(d))
=\begin{cases}
\upsilon_r(G)&\text{if }d=1,\, p\neq 2,\, G\textit{ is non-bipartite}, 1\leq r\leq s,\\
\lambda_r(G)&\text{if }d=1,\, p=2,\, 1\leq r\leq s-1,\\
\lambda_r(G)&\text{if }d=1,\,\textit{G is bipartite}, 1\leq r\leq s-1,\\
r&\text{if } d\geq 2\text{ and }1\leq r\leq m.\\
\end{cases}
$$
}
\quad Thus computing $\upsilon_r(G)$ and $\lambda_r(G)$ is equivalent to
computing the $r$-th generalized Hamming weight of $C_\mathbb{X}(1)$  
for $K=\mathbb{F}_2$ or $K=\mathbb{F}_3$. These are the only cases
that matter. 

The \textit{incidence matrix code} of a graph $G$ over a finite field
$K$ of characteristic $p$, denoted $C_p(G)$, is the
linear code generated by the rows of the incidence matrix of
$G$. 
As an application to coding theory we obtain the following combinatorial
formulas for the generalized Hamming weights of $C_p(G)$ when $G$ is
connected (Corollary~\ref{pepe-vila-2018-coro1}). 
$$ 
\delta_r(C_p(G))=\begin{cases}
\upsilon_r(G)&\text{if } p\neq 2,\, G\textit{ is non-bipartite}, 1\leq r\leq s,\\
\lambda_r(G)&\text{if }p=2,\, 1\leq r\leq s-1,\\
\lambda_r(G)&\text{if }G\textit{ is bipartite}, 1\leq r\leq s-1.
\end{cases}
$$
\quad The \textit{minimum distance} of the incidence matrix code of
the graph $G$ is defined as
$$
\delta(C_p(G)):=\min\{\omega(a)\colon a \in C_p(G) \setminus
\{0\}\},
$$ 
where $\omega(a)$ is the Hamming weight of the vector $a$, that
is, the number of non-zero entries of $a$. The 
minimum distance of $C_p(G)$ is $\delta_1(C_p(G))$, the $1$st Hamming
weight of this code. Then we can recover the combinatorial formulas of Dankelmann, Key and Rodrigues 
\cite[Theorems~1--3]{Dankelmann-Key-Rodrigues} for the minimum
distance of $C_p(G)$ in terms of the weak edge biparticity
$\upsilon(G)$ and the
edge connectivity  $\lambda(G)$ of $G$ (Corollary~\ref{Dankelmann-etal}). 

Using Macaulay~2 \cite{mac2}, 
SageMath \cite{sage}, and Wei's duality \cite[Theorem~3]{wei}, we can
compute the weight hierarchy of $C_p(G)$. In
Sections~\ref{examples-section} and \ref{procedures-section}, we illustrate this with some
examples and procedures. There are algebraic methods that can be used to obtain lower bounds for
$\delta_r(C_p(G))$ or equivalently for $\lambda_r(G)$ and
$\upsilon_r(G)$ \cite[Theorem~4.9]{rth-footprint}.

\smallskip

For all unexplained
terminology and additional information  we refer to 
\cite{Boll,diestel,Har} (for graph theory),
\cite{MacWilliams-Sloane,tsfasman} 
(for the theory of
error-correcting codes and linear codes), and
\cite{Eisen,cocoa-book,monalg-rev} (for 
commutative algebra and Hilbert functions). 

\section{Reed--Muller-type codes over connected
graphs}\label{RMT-codes-section}

In this section we present our main results. 
To avoid repetitions, we continue to employ
the notations and 
definitions used in Section~\ref{intro-section}.

\begin{lemma}\label{nov19-18} Let $G$ be a connected graph and let $e_1,\ldots,e_r$ be a minimum
set of edges whose removal makes the graph bipartite. Then there is
$\omega\colon V(G)\rightarrow\{+,-\}$ such that the edges of $G$ 
whose vertices have the same sign are precisely
$e_1,\ldots,e_r$. 
\end{lemma}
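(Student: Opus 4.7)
The plan is to produce the desired sign function $\omega$ from any proper $2$-coloring of the graph $G':=G-\{e_1,\ldots,e_r\}$, and then to use minimality of $r$ to pin down which edges are monochromatic.

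First, I would observe that $G'$ is bipartite by the choice of $e_1,\ldots,e_r$. A bipartite graph admits a proper $2$-coloring of its vertex set (one may $2$-color each connected component independently by picking a root and alternating signs along a spanning tree), so there exists $\omega\colon V(G)\to\{+,-\}$ which colors $V(G)=V(G')$ properly as a $2$-coloring of $G'$. By the definition of proper $2$-coloring, every edge of $G'$ has endpoints of opposite signs; equivalently, no edge of $G$ outside $\{e_1,\ldots,e_r\}$ is monochromatic under $\omega$.

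It remains to show that each $e_i$ is monochromatic, i.e.\ both endpoints have the same sign. I would do this by contradiction, invoking the minimality of $r$: suppose some $e_i=\{u,v\}$ has $\omega(u)\neq\omega(v)$. Then the same function $\omega$ is also a proper $2$-coloring of the graph $G'\cup\{e_i\}$, since the only new edge $e_i$ joins vertices of opposite sign. Hence $G'\cup\{e_i\}=G-(\{e_1,\ldots,e_r\}\setminus\{e_i\})$ is bipartite, which exhibits a set of only $r-1$ edges whose removal makes $G$ bipartite, contradicting the assumed minimality of $r$.

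Combining the two directions, an edge of $G$ is monochromatic under $\omega$ if and only if it belongs to $\{e_1,\ldots,e_r\}$, as required. There is no real obstacle here; the only step worth flagging is the minimality argument in the last paragraph, which uses that adding a properly $2$-colored edge back to a bipartite graph keeps it bipartite.
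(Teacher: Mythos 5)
Your proof is correct and follows essentially the same route as the paper's: take a proper $2$-coloring (bipartition) of $G\setminus\{e_1,\ldots,e_r\}$ as the sign function, and use minimality of $r$ to show each $e_i$ must be monochromatic, since otherwise adding it back would leave a bipartite graph obtained by deleting only $r-1$ edges. The only cosmetic difference is that you treat the bipartite case ($r=0$) uniformly rather than splitting it off as the paper does.
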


\begin{proof} If $G$ is bipartite, there is nothing to prove. If $G$
is non-bipartite, pick a bipartition $V_1$, $V_2$ of the 
graph $G\setminus\{e_1,\ldots,e_r\}$. Setting $\omega(v)=+$ if 
$v\in V_1$ and $\omega(v)=-$ if $v\in V_2$, note that the vertices
of each $e_i$ have the same sign. Indeed if the vertices of $e_i$ have
different sign, then 
$G\setminus\{e_1,\ldots,e_{i-1},e_{i+1},\ldots,e_r\}$ is bipartite, 
a contradiction. 
\end{proof}

The edge biparticity of a graph $G$ can be 
easily expressed by considering all possible ways of making $G$ a
vertex-signed graph. 

\begin{lemma}\label{dec7-18} Let $G$ be a connected graph, let $\mathcal{F}$ be the
set of surjective maps $\omega\colon V(G)\rightarrow\{+,-\}$,
and let $E_\omega$ be the set of edges of $G$ whose vertices have the
same sign. Then
$$
\varphi(G)=\min\{|E_\omega|:\, \omega\in\mathcal{F}\}.
$$
\end{lemma}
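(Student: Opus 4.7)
The plan is to prove the two inequalities $\varphi(G)\le\min\{|E_\omega|\}$ and $\varphi(G)\ge\min\{|E_\omega|\}$ separately, each time translating between the ``edge-removal'' formulation of biparticity and the ``vertex-signing'' formulation encoded by $\omega$.

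For the direction $\varphi(G)\le\min\{|E_\omega|:\omega\in\mathcal{F}\}$, I would fix any $\omega\in\mathcal{F}$ and observe that, by surjectivity, the two fibers $V_+:=\omega^{-1}(+)$ and $V_-:=\omega^{-1}(-)$ form a partition of $V(G)$ into nonempty sets. By the very definition of $E_\omega$, every edge of $G\setminus E_\omega$ has its endpoints in distinct fibers, so $(V_+,V_-)$ is a bipartition of $G\setminus E_\omega$. Hence $G\setminus E_\omega$ is bipartite, which gives $\varphi(G)\le|E_\omega|$; taking a minimum over $\omega\in\mathcal{F}$ finishes this direction.

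For the reverse inequality, I would choose a minimum set of edges $\{e_1,\ldots,e_r\}$ whose removal makes $G$ bipartite, so that $r=\varphi(G)$. Lemma~\ref{nov19-18} supplies a map $\omega\colon V(G)\to\{+,-\}$ with $E_\omega=\{e_1,\ldots,e_r\}$, built from an arbitrary bipartition $V_1\sqcup V_2$ of $G\setminus\{e_1,\ldots,e_r\}$. Since $|V(G)|\ge 2$, I can arrange both $V_1$ and $V_2$ to be nonempty: either $G\setminus\{e_1,\ldots,e_r\}$ has at least one edge, in which case any bipartition has two nonempty sides, or it is edgeless and any nontrivial splitting of $V(G)$ is a legitimate bipartition. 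The resulting $\omega$ is surjective, hence belongs to $\mathcal{F}$, and satisfies $|E_\omega|=r=\varphi(G)$, yielding $\min\{|E_\omega|:\omega\in\mathcal{F}\}\le\varphi(G)$.

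The only delicate point is securing the surjectivity of $\omega$, since Lemma~\ref{nov19-18} on its own does not demand it. The cleanest way I see is to inspect the construction in that lemma and exploit the freedom in choosing the bipartition, as above. A slightly different workaround would be a short case split: if the $\omega$ produced by Lemma~\ref{nov19-18} fails to be surjective then it is constant, forcing $E(G)=E_\omega=\{e_1,\ldots,e_r\}$ and $|E(G)|=\varphi(G)$, after which \emph{any} surjective $\omega'\in\mathcal{F}$ trivially satisfies $|E_{\omega'}|\le|E(G)|=\varphi(G)$. Apart from this small subtlety, both inclusions follow from direct definition chasing.
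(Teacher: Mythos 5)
Your proof is correct and follows essentially the same route as the paper: one inequality by observing that the sign classes of any $\omega\in\mathcal{F}$ bipartition $G\setminus E_\omega$, and the other by invoking Lemma~\ref{nov19-18} on a minimum bipartizing edge set. Your explicit treatment of the surjectivity of the resulting $\omega$ is a welcome bit of extra care that the paper handles only implicitly (via its separate treatment of the bipartite case), but it does not change the argument.
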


\begin{proof} If $G$ is bipartite, $\varphi(G)=0$ and there is nothing
to prove. Assume that $G$ is non-bipartite. Then
$E_\omega\neq\emptyset$ for $\omega\in\mathcal{F}$. By
Lemma~\ref{nov19-18}, there is $\omega\in\mathcal{F}$ such that
$\varphi(G)=|E_\omega|$. Thus, one has the inequality ``$\geq$''. To
show the reverse inequality take 
$\omega$ in $\mathcal{F}$. It suffices to show that
$\varphi(G)\leq|E_\omega|$. The vertex set of $G$ can be
partitioned as $V(G)=V^+\cup V^-$, where $V^+$ (resp. $V^-$) is the set
of vertices of $G$ with positive (resp. negative) sign. Then
$G\setminus E_\omega$ is bipartite with bipartition $V^+$, $V^-$. 
Thus $\varphi(G)\leq|E_\omega|$.
\end{proof}

This lemma can be used to compute $\varphi(G)$.
Let $K$ be a field of ${\rm char}(K)\neq 2$. 
Each $\omega$ in $\mathcal{F}$ defines a linear polynomial 
$$
h_\omega=\sum_{\omega(t_i)=+}t_i-\sum_{\omega(t_i)=-}t_i.
$$
\quad The number of points of $\mathbb{X}$ where $h_\omega$ does not
vanish is equal to $|E_\omega|$. As a consequence one obtains the
following algebraic formula for the edge biparticity.

\begin{proposition}\label{saturday-afternoon-dec8-18} 
Let $G$ be a connected non-bipartite graph over a
field of ${\rm char}(K)\neq 2$. Then
$$
\varphi(G)=\min\{|\mathbb{X}\setminus V_\mathbb{X}(h)|\colon
\, h=a_1t_1+\cdots+a_st_s,\, a_i\in\{1,-1\},\ \forall\, i\}.
$$
\end{proposition}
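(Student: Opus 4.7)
The plan is to set up an explicit bijection between linear forms $h=a_1t_1+\cdots+a_st_s$ with $a_i\in\{1,-1\}$ and maps $\omega\colon V(G)\to\{+,-\}$, under which $|\mathbb{X}\setminus V_\mathbb{X}(h)|=|E_\omega|$ on the nose; the proposition will then follow immediately from Lemma~\ref{dec7-18}. To this end, associate to each $h$ the map $\omega_h$ defined by $\omega_h(t_i)=+$ if $a_i=1$ and $\omega_h(t_i)=-$ if $a_i=-1$. This is clearly a bijection between $\{1,-1\}^s$ and the set of all (not necessarily surjective) maps $V(G)\to\{+,-\}$, and under it $h$ coincides with the linear form $h_{\omega_h}$ introduced before Lemma~\ref{dec7-18}.

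The main computation is then to evaluate $h$ at the columns of the incidence matrix. For the edge $f_j=\{t_i,t_k\}$ the column $P_j$ has entries $1$ in positions $i,k$ and $0$ elsewhere, so $h(P_j)=a_i+a_k$. Since ${\rm char}(K)\neq 2$, this scalar equals zero precisely when $a_i=-a_k$, that is, when the endpoints of $f_j$ receive opposite signs under $\omega_h$. Hence the edges $f_j$ for which $[P_j]\notin V_\mathbb{X}(h)$ are exactly those whose endpoints carry the same sign, which yields $|\mathbb{X}\setminus V_\mathbb{X}(h)|=|E_{\omega_h}|$.

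What remains is to reconcile the fact that the minimum in the proposition ranges over all sign vectors, whereas Lemma~\ref{dec7-18} restricts to surjective $\omega\in\mathcal{F}$. The only sign vectors excluded from $\mathcal{F}$ are the two constant ones; for those $\omega$, every edge has same-sign endpoints and $|E_\omega|=m$. On the other hand, because $G$ is connected and non-bipartite we have $s\geq 2$, and any surjective $\omega$ obtained by placing a single vertex $v$ of positive degree in $V^+$ and the remaining vertices in $V^-$ satisfies $|E_\omega|=m-\deg(v)<m$. Hence the constant maps never attain the minimum, so adjoining them to $\mathcal{F}$ does not change it, and the right-hand side of the proposition equals $\min\{|E_\omega|:\omega\in\mathcal{F}\}=\varphi(G)$ by Lemma~\ref{dec7-18}. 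The entire argument is essentially bookkeeping once one notes the identity $h(P_j)=a_i+a_k$; the only mild subtlety is this surjective-versus-all mismatch, and the hypothesis ${\rm char}(K)\neq 2$ is used precisely to guarantee that $a_i+a_k\neq 0$ whenever $a_i=a_k$.
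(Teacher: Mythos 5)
Your proof is correct and follows essentially the same route as the paper: identify each sign vector with a map $\omega\colon V(G)\to\{+,-\}$, verify $h(P_j)=a_i+a_k$ so that $|\mathbb{X}\setminus V_\mathbb{X}(h)|=|E_{\omega}|$ when ${\rm char}(K)\neq 2$, and invoke Lemma~\ref{dec7-18}. You are in fact slightly more careful than the paper in disposing of the two constant sign assignments (i.e., $h=\pm(t_1+\cdots+t_s)$), which the paper's proof passes over without comment.
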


\begin{proof} Any $h=a_1t_1+\cdots+a_st_s$, $a_i\in\{1,-1\}$ for all
$i$, $h\neq\pm(t_1+\cdots+t_s)$, can be written as $h=h_\omega$ for
some $\omega\in\mathcal{F}$. As $|E_\omega|=|\mathbb{X}\setminus
V_\mathbb{X}(h_\omega)|$ for $\omega\in\mathcal{F}$, the result
follows from Lemma~\ref{dec7-18}. 
\end{proof}

This result can be used in practice to compute $\varphi(G)$ using
\textit{Macaulay}$2$ \cite{mac2} (see the examples and procedures of 
Sections~\ref{examples-section} and \ref{procedures-section}).

\begin{remark}\label{dec8-18} If we allow $a_1,\ldots,a_s$ to be in $\{0,1,-1\}$
such that not all of them are zero, we obtain the minimum distance of
$C_p(G)$. This follows from \cite[Lemma~4.3(iii)]{rth-footprint}. 
\end{remark}

The following result is well known. 

\begin{proposition}{\cite{bjorner-rank-incidence,Kulk1,Key-codes}}\label{rank-incidence} 
Let $G$ be a connected  graph with $s$ vertices and
let $A$ be its incidence matrix over a field $K$. Then 
$$
{\rm rank}(A)=\begin{cases} s&\text{if }{\rm char}(K)\neq 2\mbox{ and
 }G\textit{ is non-bipartite},\\
s-1&\text{if }{\rm char}(K)=2 \text{ or }G\text{ is bipartite}.
\end{cases}
$$
\end{proposition}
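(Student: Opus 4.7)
The plan is to analyze the row (or equivalently, the left null) space of $A$ and to produce explicit linear dependencies in the two ``small rank'' cases, while producing $s$ (respectively $s-1$) linearly independent columns in the others.

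First I would establish the easy upper bounds. If $\mathrm{char}(K)=2$, every column of $A$ has exactly two $1$'s, so the sum of all rows of $A$ equals $(2,\ldots,2)=(0,\ldots,0)$, giving a nontrivial relation and hence $\mathrm{rank}(A)\le s-1$. If $G$ is bipartite with bipartition $V(G)=V_1\cup V_2$, then in any column the two nonzero entries lie one in a row indexed by $V_1$ and one in a row indexed by $V_2$; therefore $\sum_{t_i\in V_1}A_i-\sum_{t_i\in V_2}A_i=0$, again a nontrivial relation, so $\mathrm{rank}(A)\le s-1$.

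Next I would produce the matching lower bound $\mathrm{rank}(A)\ge s-1$ valid over any field. Fix a spanning tree $T$ of $G$ (which exists because $G$ is connected) and claim that the $s-1$ columns of $A$ corresponding to the edges of $T$ are linearly independent over $K$. I would prove this by induction on $s$: a tree with at least two vertices has a leaf $t_i$, and the unique edge $f_j$ incident to $t_i$ has a column which is the only column among the chosen ones with a nonzero entry in row $i$. Any linear dependence among the chosen columns must therefore assign coefficient $0$ to $f_j$; removing $t_i$ and $f_j$ leaves a tree on $s-1$ vertices and we finish by induction (the base case being trivial). This settles the cases $\mathrm{char}(K)=2$ and $G$ bipartite, since the two bounds coincide.

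It remains to show that if $\mathrm{char}(K)\ne 2$ and $G$ is non-bipartite, then $\mathrm{rank}(A)=s$. I would examine the left null space: suppose $c=(c_1,\ldots,c_s)$ with $cA=0$. For every edge $f_j=\{t_i,t_k\}$ this yields $c_i+c_k=0$, i.e.\ $c_k=-c_i$. Propagating this sign rule along the edges of a closed walk shows that if we traverse a cycle of length $\ell$ returning to $t_i$ we get $c_i=(-1)^\ell c_i$. Since $G$ is non-bipartite it contains an odd cycle through some vertex $t_i$, so $2c_i=0$ and, since $\mathrm{char}(K)\ne 2$, $c_i=0$; the sign-propagation rule together with the connectedness of $G$ then forces $c=0$. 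Hence the rows of $A$ are linearly independent and $\mathrm{rank}(A)=s$. The only mildly delicate step is the spanning-tree argument, where one must be careful that the inductive removal of a leaf does not destroy linear independence; this is handled cleanly by the leaf-row observation above.
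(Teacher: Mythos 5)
Your argument is correct and complete. Note, however, that the paper does not prove this proposition at all: it is stated as a known result with citations to Bj\"orner--Karlander, Grossman--Kulkarni--Schochetman, and Key--Fish--Mwambene, so there is no in-paper proof to compare against. What you have written is the standard self-contained argument (explicit row relations for the upper bounds, spanning-tree columns for the lower bound $s-1$, and sign propagation along an odd cycle to kill the left kernel when ${\rm char}(K)\neq 2$ and $G$ is non-bipartite), and all the delicate points --- the leaf-row step in the tree induction and the use of connectedness to propagate $c_i=0$ to every vertex --- are handled correctly.
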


\begin{corollary}\label{dim-cp} Let $G$ be a connected graph with $s$ vertices and 
$m$ edges and let $C=C_p(G)$ $($resp. $C^\perp)$ be the
code $($resp. dual code$)$ of $G$. Then
\begin{itemize}
\item[(a)] $C$ $($resp. $C^\perp$$)$ is an $[m,s]$ $($resp. $[m,m-s]$$)$ code if $p\neq 2$
and $G$ is non-bipartite. 
\item[(b)] $C$ $($resp. $C^\perp$$)$ is an $[m,s-1]$ $($resp.
$[m,m-s+1]$$)$ code if $p=2$ or $G$ is bipartite. 
\end{itemize}
\end{corollary}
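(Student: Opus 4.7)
The plan is to read off both statements directly from Proposition~\ref{rank-incidence}, using only the standard facts that (i) a code generated by the rows of an $s\times m$ matrix sits inside $K^m$ and has dimension equal to the rank of the matrix, and (ii) the dual of an $[m,k]$ code is an $[m,m-k]$ code.

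First I would observe that $C=C_p(G)$ is, by definition, the row space of the incidence matrix $A$ of $G$, which is an $s\times m$ matrix over $K=\mathbb{F}_{p}$. Since $A$ has $m$ columns, $C$ is a linear subspace of $K^m$ and hence has length $m$; the same is true of $C^\perp$. Moreover $\dim_K C=\mathrm{rank}(A)$.

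Next, I would split into the two cases of Proposition~\ref{rank-incidence}. In case (a), $p\neq 2$ and $G$ is non-bipartite, so $\mathrm{rank}(A)=s$; this gives $\dim_K C=s$, and consequently $\dim_K C^\perp=m-s$ by the standard formula $\dim C+\dim C^\perp=m$. In case (b), $p=2$ or $G$ is bipartite, so $\mathrm{rank}(A)=s-1$; this gives $\dim_K C=s-1$ and $\dim_K C^\perp=m-s+1$. Combining the length and dimension in each case yields the asserted $[m,s]$, $[m,m-s]$, $[m,s-1]$, $[m,m-s+1]$ parameters.

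There is no real obstacle here: the corollary is essentially a bookkeeping consequence of Proposition~\ref{rank-incidence} together with the elementary linear algebra identity for dimensions of dual codes. The only thing worth being careful about is remembering that the length $m$ is read off from the number of columns (edges) of $A$, not from the number of rows (vertices), since it is the incidence matrix and not its transpose whose rows generate $C_p(G)$.
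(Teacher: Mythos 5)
Your argument is correct and is essentially identical to the paper's proof, which also just combines Proposition~\ref{rank-incidence} with the identity $\dim(C)+\dim(C^\perp)=m$. Your version merely spells out the bookkeeping in more detail.
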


\begin{proof} This follow from Proposition~\ref{rank-incidence}
noticing that $\dim(C)+\dim(C^\perp)=m$.
\end{proof}

\begin{lemma}\label{nov23-18} Let $G$ be a connected graph and let $K$
be a field. The following hold.

{\rm (a)} If ${\rm char}(K)\neq 2$, $G$ is non-bipartite and $h$ is
a linear form in $I(\mathbb{X})$, then
$h=0$.

{\rm (b)} If ${\rm char}(K)=2$ and $h\neq 0$ is
a linear form in $I(\mathbb{X})$, then
$h=c\sum_{i=1}^st_i$, for some $c\in K$.

{\rm (c)} If ${\rm char}(K)=2$ and $h$ is
a linear form in $I(\mathbb{X})$ in $s-1$ variables, then
$h=0$.
\end{lemma}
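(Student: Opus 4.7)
The strategy is to translate the vanishing condition on $\mathbb{X}$ into combinatorial constraints on the coefficients of a linear form, then exploit connectivity and (non-)bipartiteness of $G$. The key observation is: each column $P_j$ of the incidence matrix corresponds to an edge $f_j=\{t_i,t_k\}$ and has exactly the $i$-th and $k$-th entries equal to $1$ and all other entries zero. Hence, for a linear form $h=\sum_{i=1}^s a_it_i$, one has $h(P_j)=a_i+a_k$, so $h\in I(\mathbb{X})$ if and only if $a_i+a_k=0$ in $K$ for every edge $\{t_i,t_k\}$ of $G$.

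For part (a), I would use this equivalence to rewrite the condition as $a_i=-a_k$ along each edge. Since $G$ is non-bipartite, it contains an odd cycle $t_{i_1},t_{i_2},\ldots,t_{i_{2\ell+1}},t_{i_1}$. Traversing the cycle yields $a_{i_1}=-a_{i_2}=a_{i_3}=\cdots=-a_{i_1}$, hence $2a_{i_1}=0$; as $\mathrm{char}(K)\neq 2$ this forces $a_{i_1}=0$. Then the relation $a_j=-a_k$ on each edge propagates the value $0$ to every neighbor, and connectivity of $G$ gives $a_j=0$ for all $j$, i.e.\ $h=0$.

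For part (b), the characteristic-two case collapses the relation $a_i+a_k=0$ to $a_i=a_k$ along every edge. Again by connectivity, any two vertices are joined by a path along which the coefficients are forced to coincide, so all $a_i$ equal a common scalar $c\in K$, giving $h=c\sum_{i=1}^st_i$ as claimed. For part (c), the hypothesis that $h$ involves at most $s-1$ of the variables means some $a_{i_0}$ is zero; combined with part (b) this forces $c=0$, so $h=0$.

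I do not anticipate a genuine obstacle. The only care required is to set up the incidence/evaluation translation correctly and to invoke the odd cycle explicitly in part (a); once this is done, parts (b) and (c) follow immediately from connectivity.
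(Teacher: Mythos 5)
Your proof is correct, but it takes a different route from the paper. The paper derives all three parts from the known rank formula for incidence matrices of connected graphs (Proposition~\ref{rank-incidence}, cited to Bj\"orner--Karlander, Grossman--Kulkarni--Schochetman, and Key et al.): it introduces the map $\psi\colon K^s\to K^m$, $x\mapsto xA$, notes that $h\in I(\mathbb{X})$ iff its coefficient vector lies in $\ker(\psi)$, and then reads off $\dim\ker(\psi)=s-\operatorname{rank}(A)$, which is $0$ in case (a) and $1$ in case (b) (with $\mathbf{1}\in\ker(\psi)$ exhibiting the kernel). You instead work directly with the edge-by-edge constraints $a_i+a_k=0$, using an odd cycle to kill one coefficient when $\operatorname{char}(K)\neq 2$ and connectivity to propagate, and using connectivity alone in characteristic $2$. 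Your argument is self-contained and in effect reproves the kernel computation underlying the rank formula, whereas the paper's proof is shorter by outsourcing that combinatorial work to the cited result. Both are valid; yours has the minor advantage of not depending on an external reference, at the cost of redoing a standard computation.
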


\begin{proof} Let $\psi$ be 
the linear map $\psi\colon K^s\rightarrow K^m$, $x\mapsto xA$. Fix 
a linear from $h=\sum_{i=1}^sa_it_i$ of $S_1$ and set
$v_h=(a_1,\ldots,a_s)$. Then $v_h$ is in ${\rm ker}(\psi)$ if 
and only if  $h\in I(\mathbb{X})$. For use below notice 
that  $s=\dim({\rm ker}(\psi))+{\rm rank}(A)$.

(a): By Proposition~\ref{rank-incidence}, ${\rm ker}(\psi)=(0)$. Thus
$v_h=0$, that is, $h=0$.

(b): From Proposition~\ref{rank-incidence} 
we get that $\ker(\psi)$ has dimension $1$, and since $\mathbf{1} =
(1,\ldots,1)\in\ker(\psi)$ the result follows.

(c): It is a consequence of (b).
\end{proof}

\begin{lemma}\label{nov24-18} Let $G$ be a
connected bipartite graph with bipartition $V_1$, $V_2$. The following
hold.
\begin{itemize}
\item[(a)] If $K$ is a field and $h\neq 0$ is
a linear form of $S$ that vanishes at all points of $\mathbb{X}$, 
then $h=c(\sum_{t_i\in V_1}t_i- \sum_{t_i\in V_2}t_i)$ for 
some $c\in K$.
\item[(b)] If $t_i$ and $t_j$ are in $V_1$, then $G\cup\{t_i,t_j\}$
contains an odd cycle.
\end{itemize}
\end{lemma}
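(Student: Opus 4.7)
My plan is to prove the two parts separately, using the dimension count from Proposition~\ref{rank-incidence} for part (a) and a simple path-parity argument for part (b).

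For part (a), I would imitate the strategy already used in Lemma~\ref{nov23-18}. Consider the linear map
$$
\psi\colon K^s\longrightarrow K^m,\quad x\longmapsto xA,
$$
so that a linear form $h=\sum_{i=1}^s a_it_i$ of $S_1$ lies in $I(\mathbb{X})$ if and only if the vector $v_h=(a_1,\ldots,a_s)$ lies in $\ker(\psi)$. Since $G$ is connected and bipartite, Proposition~\ref{rank-incidence} gives $\mathrm{rank}(A)=s-1$, hence $\dim\ker(\psi)=1$.

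The second step is to exhibit an explicit generator of $\ker(\psi)$. Let $v$ be the vector with entry $+1$ at coordinates indexed by $V_1$ and $-1$ at coordinates indexed by $V_2$. For every edge $f_j$ of $G$, one endpoint lies in $V_1$ and the other in $V_2$ (bipartiteness), so the $j$-th column of $A$ has exactly two nonzero entries, one in a $V_1$-slot and one in a $V_2$-slot, and the product $vA$ has $j$-th coordinate $1+(-1)=0$. Hence $v\in\ker(\psi)$, and since $v\neq 0$ and $\dim\ker(\psi)=1$, every element of $\ker(\psi)$ is a scalar multiple of $v$. Translating back to linear forms yields $h=c\bigl(\sum_{t_i\in V_1}t_i-\sum_{t_i\in V_2}t_i\bigr)$ for some $c\in K$, as desired. (Note: in characteristic $2$ this collapses to $c\sum_{i=1}^s t_i$, consistent with Lemma~\ref{nov23-18}(b).)

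For part (b), I would argue by path parity. Since $G$ is connected, there exists a path $P$ in $G$ from $t_i$ to $t_j$. Because $G$ is bipartite with bipartition $V_1,V_2$, the vertices along $P$ alternate between $V_1$ and $V_2$; as both endpoints $t_i$ and $t_j$ belong to $V_1$, the path $P$ must have even length. Adjoining the new edge $\{t_i,t_j\}$ to $P$ produces a closed walk of odd length in $G\cup\{t_i,t_j\}$, which contains an odd cycle (a standard fact: every closed walk of odd length contains an odd cycle).

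The only mild subtlety is the verification that the sign vector lies in $\ker(\psi)$, but this reduces to the single computation $1+(-1)=0$ column by column, so I expect no real obstacle; the main conceptual input is the rank formula of Proposition~\ref{rank-incidence}.
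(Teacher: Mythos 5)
Your proposal is correct and follows essentially the same route as the paper: part (a) is exactly the adaptation of the proof of Lemma~\ref{nov23-18} that the paper invokes (rank $s-1$ from Proposition~\ref{rank-incidence}, hence a one-dimensional kernel, generated by the $\pm 1$ sign vector), and part (b) is the same even-path-plus-new-edge parity argument. Your write-up merely fills in the details the paper leaves implicit, such as the column-by-column check that the sign vector lies in $\ker(\psi)$.
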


\begin{proof} (a): It follows adapting the proof of
Lemma~\ref{nov23-18}.

(b): As $G$ is connected and bipartite, there is a path $\mathcal{P}$ in $G$ of even length
joining $t_i$ and $t_j$. Then, adding the new edge $\{t_i,t_i\}$ to
the path $\mathcal{P}$, gives an odd cycle of $G\cup\{t_i,t_j\}$. 
\end{proof}

\begin{lemma}\label{nov28-18} Let $G$ be a connected non-bipartite graph. If 
$\ell=\upsilon_r(G)$ and $f_1,\ldots,f_\ell$ are edges of $G$, then the graph
$H=G\setminus\{f_1,\ldots,f_\ell\}$ has at most $r$ bipartite
connected components. 
\end{lemma}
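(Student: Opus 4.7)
The plan is a direct contradiction argument. Suppose $H=G\setminus\{f_1,\ldots,f_\ell\}$ has at least $r+1$ bipartite connected components. Since $G$ is connected and non-bipartite, $G$ itself has no bipartite component, which forces $\upsilon_r(G)\geq 1$ and hence $\ell\geq 1$. I will show that putting any one of the deleted edges back still leaves at least $r$ bipartite components, so that $\ell-1$ deletions suffice to reach $r$ bipartite components, contradicting the minimality $\ell=\upsilon_r(G)$.

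Fix an arbitrary $f_i$, set $F':=\{f_1,\ldots,f_\ell\}\setminus\{f_i\}$, and compare $H$ with $H':=G\setminus F'=H\cup\{f_i\}$. Write $b\geq r+1$ for the number of bipartite components of $H$, and split into cases according to where the two endpoints of $f_i$ lie in the component decomposition of $H$. If both endpoints lie in the same component, the other components of $H$ are untouched in $H'$, so $H'$ retains at least $b-1\geq r$ bipartite components, regardless of whether adding $f_i$ spoils the bipartiteness of that one component. If the endpoints lie in two different components of $H$ and at least one of them is non-bipartite, the merged component of $H'$ is non-bipartite, but the $b-1$ or $b$ untouched bipartite components survive.

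The only subtle case is when $f_i$ joins two distinct bipartite components $H_j$ and $H_k$ of $H$. The key observation is that $H_j\cup H_k\cup\{f_i\}$ is still bipartite: since $H_j$ and $H_k$ are vertex-disjoint and each is bipartite, I am free to flip the bipartition of $H_k$, if necessary, so that the endpoint of $f_i$ in $H_k$ receives the color opposite to that of the endpoint in $H_j$; this yields a consistent $2$-coloring of the merger. Hence $H'$ contains $b-2+1=b-1\geq r$ bipartite components in this case too. Taken together, the cases show that $F'$ is a set of $\ell-1$ edges whose deletion leaves a graph with at least $r$ bipartite components, contradicting $\upsilon_r(G)=\ell$.

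I expect the only real hurdle to be isolating the bipartite-merger observation that handles the last case; once it is in hand, the case analysis is routine. The hypothesis that $G$ is connected and non-bipartite is used only to guarantee $\ell\geq 1$, so that an edge is available to be restored to $H$.
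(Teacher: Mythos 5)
Your proof is correct and takes essentially the same route as the paper's: both argue by contradiction, restore a single deleted edge, and check by cases on where its endpoints fall among the components of $H$ that at least $r$ bipartite components survive, the decisive observation in both being that an edge joining two vertex-disjoint bipartite components produces a bipartite component. The only cosmetic differences are that you restore an arbitrary $f_i$ rather than $f_1$ and make the count of surviving bipartite components (and the need for $\ell\geq 1$) explicit.
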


\begin{proof} Let $H_1,\ldots,H_n$ be the connected components of $H$.
We proceed by contradiction assuming that $H_1,\ldots,H_{r+1}$ are
bipartite. Consider the graph $G'=G\setminus\{f_2,\ldots,f_\ell\}$. If
$f_1\subset V(H_i)$ for some $i$, then $G'$ has $r$ bipartite components,
a contradiction. Thus $f_1\not\subset V(H_i)$ for $i=1,\ldots,n$. 
Hence, $f_1$ joins $H_i$ and $H_j$ for some $i,j$ with $i<j$. If
$j\leq r+1$, the graph $H_i\cup H_j\cup\{f_1\}$ is bipartite and
connected, and $G'$ has $r$ bipartite components, a contradiction.
Thus $j>r+1$ and in this case $G'$ has $r$ bipartite components, a
contradiction. 
\end{proof}

We come to one of our main results.

\begin{theorem}\label{pepe-vila-2018} Let $G$ be a connected
non-bipartite graph with $s$
vertices and $m$ edges, let $K$ be a field of ${\rm char}(K)\neq 2$, and let
$A$ be the incidence matrix of $G$. If $\mathbb{X}$ is the set of
column vectors of $A$ and $\upsilon_r(G)$ is the $r$-th weak edge biparticity 
of $G$, then 
$$ 
\delta_\mathbb{X}(d,r)
=\begin{cases}
\upsilon_r(G)&\text{
if }d=1\text{ and }1\leq r\leq s=\dim_K(C_\mathbb{X}(d)),\\
r&\text{ if } d\geq 2\text{ and }1\leq r\leq m=\dim_K(C_\mathbb{X}(d)).\\
\end{cases}
$$
\end{theorem}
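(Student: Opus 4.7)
The plan is to treat $d=1$ and $d\geq 2$ separately, since only the former involves $\upsilon_r(G)$. For $d=1$ I aim to prove both inequalities $\delta_{\mathbb{X}}(1,r)\leq \upsilon_r(G)$ and $\delta_{\mathbb{X}}(1,r)\geq \upsilon_r(G)$; throughout I use that evaluation is injective on $S_1$ by Lemma~\ref{nov23-18}(a), so linear independence of $h_i\in S_1$ coincides with linear independence of their images in $C_{\mathbb{X}}(1)$.

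For the upper bound, I would start from a minimum edge set $E_0\subset E(G)$ of size $\upsilon_r(G)$ whose removal produces at least $r$ bipartite components $H_1,\ldots,H_r$. Taking bipartitions $V(H_i)=V_{i,+}\cup V_{i,-}$, set $h_i=\sum_{t_j\in V_{i,+}} t_j-\sum_{t_j\in V_{i,-}} t_j$. Since the $V(H_i)$ are pairwise disjoint, the $h_i$ are linearly independent, and a direct evaluation shows $h_i(P_e)=0$ for every edge $e$ inside a component of $G\setminus E_0$. For $e\in E_0$, the minimality of $E_0$ rules out any configuration that would leave the bipartite-component count unchanged after restoring $e$; in particular $e$ cannot have both endpoints in non-bipartite components of $G\setminus E_0$, and if both endpoints lie in the same $H_i$ they must be on the same side of the bipartition. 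In the remaining configurations some $h_i(P_e)$ equals $\pm 1$ or $\pm 2$, nonzero because $\mathrm{char}(K)\neq 2$. Hence the support of the subspace spanned by $\{h_1,\ldots,h_r\}$ is exactly $E_0$, giving $|\mathbb{X}\setminus V_{\mathbb{X}}(F)|=\upsilon_r(G)$.

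For the lower bound, I take any $r$ linearly independent forms $h_1,\ldots,h_r\in S_1$ and let $W\subset K^s$ be the orthogonal complement of their coefficient vectors, of dimension $s-r$. An edge $e=\{t_a,t_b\}$ vanishes on every $h_i$ iff $\chi_e:=e_a+e_b\in W$; let $E_W$ denote the set of such edges. I analyse the subgraph $\bar G=(V(G),E_W)$ via the quotient map $\pi\colon K^s\to K^s/W$: along any edge of $\bar G$ the label flips sign, so each connected component of $\bar G$ either lies inside $V_0:=\pi^{-1}(0)$ or uses exactly the two opposite labels $\alpha,-\alpha\in K^s/W$ for some $\alpha\neq 0$. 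In the latter case an odd cycle would force $2\alpha=0$, impossible in $\mathrm{char}(K)\neq 2$, so such a component is bipartite. Applying Proposition~\ref{rank-incidence} componentwise yields $\dim\mathrm{span}\{\chi_e:e\in E_W\}=s-b$, where $b$ is the number of bipartite components of $\bar G$; since this span is contained in $W$, we obtain $s-b\leq s-r$, i.e.\ $b\geq r$. Consequently, removing the $m-|E_W|$ edges outside $E_W$ leaves at least $r$ bipartite components, so $m-|E_W|\geq \upsilon_r(G)$.

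For $d\geq 2$, I show $\mathrm{ev}_d$ is surjective, whence $C_{\mathbb{X}}(d)=K^m$ and every $r$-dimensional subspace of $K^m$ has support at least $r$, with equality attained by $r$ standard basis vectors. Surjectivity follows because for each edge $\{t_a,t_b\}$ the monomial $t_a^{d-1}t_b$ evaluates at $P_{\{c,c'\}}=e_c+e_{c'}$ to $1$ when $\{c,c'\}=\{a,b\}$ and to $0$ otherwise, producing the standard basis of $K^m$ in the code. The main obstacle is the lower bound for $d=1$: recognizing the componentwise incidence-rank formula $s-b$ as the precise mechanism linking $\dim W=s-r$ to $\upsilon_r(G)$ is the key insight.
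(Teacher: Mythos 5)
Your proposal is correct, and the $d\geq 2$ case and the upper bound $\delta_\mathbb{X}(1,r)\leq\upsilon_r(G)$ (bipartition forms $h_i$ with disjoint supports, plus the minimality case analysis on the removed edges) coincide with the paper's argument. Where you genuinely diverge is the lower bound. The paper argues by contradiction: if the non-vanishing locus of $F$ has fewer than $\upsilon_r(G)$ points, the forms $h_1,\ldots,h_r$ all vanish on the incidence points of the residual graph $H$, and Lemmas~\ref{nov23-18} and \ref{nov24-18} identify the space of such forms as the span of the $n\leq r-1$ signed indicator forms $g_j$ of the bipartite components, contradicting linear independence. You instead work directly in the column-space picture: the edges killed by all $h_i$ are exactly those whose incidence vectors lie in the orthogonal complement $W$ of the coefficient vectors, and applying Proposition~\ref{rank-incidence} componentwise to the subgraph $\bar G$ on those edges gives $\dim\mathrm{span}\{\chi_e\}=s-b\leq\dim W=s-r$, hence $b\geq r$ bipartite components. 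This is the dual formulation of the same rank fact (the paper computes the left kernel, you compute the column space), but your version is direct rather than by contradiction, bypasses Lemmas~\ref{nov23-18} and \ref{nov24-18}, and makes the mechanism $\dim W=s-r\Rightarrow b\geq r$ transparent; the quotient-map digression showing that components of $\bar G$ outside $\pi^{-1}(0)$ are bipartite is correct but not actually needed for the count. Two small points you should make explicit: isolated vertices of $\bar G$ must be counted as bipartite components (both in the rank formula and in the definition of $\upsilon_r$), and the final step "at least $r$ bipartite components implies $m-|E_W|\geq\upsilon_r(G)$" uses the monotonicity fact that a removal producing more than $r$ bipartite components can be shrunk to one producing exactly $r$ (the paper leans on the same fact, via Lemma~\ref{nov28-18}-type reasoning, without comment).
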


\begin{proof} Assume $d=1$. First we show the inequality
$\delta_\mathbb{X}(1,r)\geq\upsilon_r(G)$. We proceed by contradiction
assuming that $\upsilon_r(G)>\delta_\mathbb{X}(1,r)$. Then 
$\upsilon_r(G)>|\mathbb{X}\setminus V_\mathbb{X}(F)|$ for some 
set $F$ consisting of $r$ linear forms $h_1,\ldots,h_r$ which are
linearly independent, over $K$, modulo $I(\mathbb{X})$. Let $[P_1],\ldots,[P_\ell]$ be the
points in $\mathbb{X}\setminus V_\mathbb{X}(F)$ and let
$f_1,\ldots,f_\ell$ be the edges of $G$ corresponding
to these points. Consider the graph
$H=G\setminus\{f_1,\ldots,f_\ell\}$. Let $H_1,\ldots,H_n$ be the
bipartite connected components of $H$. Since $\upsilon_r(G)>\ell$, $n$ is at
most $r-1$. Let $\mathbb{X}_H$ be the set of points corresponding to the columns of the incidence
matrix of $H$. Note that $h_i$ vanishes at all points of 
$\mathbb{X}_H$ for $i=1,\ldots,r$. Then, by
Lemma~\ref{nov23-18}, $h_1,\ldots,h_r$ are linear forms in the
variables $V(H_1)\cup\cdots\cup V(H_n)$. For each $1\leq j\leq n$,
let $A_1^j$, $A_2^j$ be the
bipartition of $H_j$ and set $g_j=\sum_{t_i\in A_1^j}t_i- \sum_{t_i\in A_2^j}t_i$. Then,
by Lemma~\ref{nov24-18}, $F=\{h_1,\ldots,h_r\}$ is in the $K$-linear space generated by
$g_1,\ldots,g_n$, a contradiction because $F$ is linearly independent
over $K$ and $n<r$. 

Now we show the inequality $\delta_\mathbb{X}(1,r)\leq\upsilon_r(G)$.
Note that, by Lemma~\ref{nov23-18}, any minimal generator of
$I(\mathbb{X})$ has degree at least $2$. Hence, it
suffices to find a set $F=\{h_1,\ldots,h_r\}$ of linearly independent
forms of degree $1$ such that
$\upsilon_r(G)=|\mathbb{X}\setminus V_\mathbb{X}(F)|$. We set
$\ell=\upsilon_r(G)$. There are edges $f_1,\ldots,f_\ell$ of $G$ such
that the graph
$$
H=G\setminus\{f_1,\ldots,f_\ell\}
$$
has exactly $r$ connected bipartite components (see
Lemma~\ref{nov28-18}). We denote the connected
components of $H$ by $H_1,\ldots,H_n$, where $H_1,\ldots,H_r$ are bipartite.
Consider a bipartition $A_1^j$, $A_2^j$ of $H_j$ for $j=1,\ldots,r$ and set 
$$h_j=\sum_{t_i\in A_1^j} t_i-\sum_{t_i\in A_2^j} t_i.$$
\quad Let $P_i$ be the point in $\mathbb{P}^{s-1}$ that corresponds to
$f_i$ for $i=1,\ldots,\ell$. To complete the proof of the case $d=1$ we need only show 
the equality $\{[P_1],\ldots,[P_\ell]\}=|\mathbb{X}\setminus
V_\mathbb{X}(F)|$. To show the inclusion ``$\subset$'' 
fix an edge $f_k$ with $1\leq k\leq \ell$ and set
$$
H'=\bigcup_{i=1}^rH_i,\quad 
H''=\bigcup_{i=r+1}^nH_i\ \text{ and }\
G'=G\setminus\{f_1,\ldots,f_{k-1},f_{k+1},\ldots,f_\ell\}. 
$$
\quad Note that $f_k\not\subset V(H_j)$ for $r< j$, otherwise $G'$ 
has $r$ bipartite components. As a consequence $f_k$ intersects $V(H')$, 
otherwise $f_k\subset V(H'')$, $f_k$ joins $H_i$ and $H_j$ for some
$r<i<j$, and the graph $G'$ has $r$ bipartite components, a
contradiction. 

Case (1): $f_k\subset V(H_j)$ for some $1\leq j\leq r$. As
$V(H_j)=A_1^j\cup A_2^j$, either $f_k\subset A_1^j$ or 
$f_k\subset A_2^j$, otherwise the graph $G'$ has $r$ bipartite
components, a contradiction. Hence, as ${\rm char}(K)\neq 2$, we get that
$h_j(P_k)\neq 0$. Thus $[P_k]\in\mathbb{X}\setminus V_\mathbb{X}(F)$.

Case (2): $f_k\cap V(H_i)\neq\emptyset$ and $f_k\cap
V(H_j)\neq\emptyset $
for some $i<j\leq r$. Then using the bipartitions of $H_i$ and $H_j$
we get $h_i(P_k)\neq 0$ and $h_j(P_k)\neq 0$. 
Thus
$[P_k]\in\mathbb{X}\setminus V_\mathbb{X}(F)$. 

Case (3): $f_k\cap V(H_i)\neq\emptyset$
for some $1\leq i\leq r$ and $f_k\cap V(H'')\neq\emptyset$. 
Then using the bipartition of $H_i$ we get $h_i(P_k)\neq 0$. Thus
$[P_k]\in\mathbb{X}\setminus V_\mathbb{X}(F)$. 

To show the inclusion ``$\supset$'' take 
$[P]\in\mathbb{X}\setminus V_\mathbb{X}(F)$ and denote by $f$ its
corresponding edge in $G$. Then there is $1\leq j\leq n$ such that
$h_j(P)\neq 0$. We proceed by contradiction assuming
$[P]\notin\{[P_1],\ldots,[P_\ell]\}$, that is, $f\neq f_i$ for
$i=1,\ldots\ell$. Then $f$ is an edge of $H$. Thus $f$ is an edge of
$H_k$ for some $1\leq k\leq n$. If $r<k$, then $h_i(P)=0$ for
$i=1,\ldots,r$ by construction of the $h_i$'s, a contradiction. Thus
$1\leq k\leq r$. If $f\subset A_1^k$ or $f\subset A_2^k$, then $f$
would not be an edge of $H_k$, a contradiction. Hence $f$ joins $A_1^k$ with
$A_2^k$, and consequently $h_i(P)=0$ for $i=1,\ldots,r$ by
construction of the $h_i$'s, a contradiction. Thus
$P=P_i$ for some $1\leq i\leq \ell$, as required.

Assume $d\geq 2$. We
claim that in this case the evaluation function ${\rm ev}_d$ is
surjective. 
Indeed, taking all $m$-tuples of the form ${\rm ev}_d(t_it_j^{d-1})$, where
$\{t_i,t_j\}$ is an edge of $G$, one gets the canonical basis of
$K^m$. Therefore $C_\mathbb{X}(d) = K^m$ and
$\delta_\mathbb{X}(d,r)=r$ for $1\leq r\leq m$.
\end{proof}

We come to another of our main results.

\begin{theorem}\label{pepe-vila-2018-char=2} Let $G$ be a connected graph with $s$
vertices and $m$ edges, let $K$ be a field of ${\rm char}(K)=2$, and let
$A$ be the incidence matrix of $G$. If $\mathbb{X}$ is the set of
column vectors of $A$ and $\lambda_r(G)$ is the $r$-th edge
connectivity of $G$, then 
$$ 
\delta_\mathbb{X}(d,r)
=\begin{cases}
\lambda_r(G)&\text{
if }d=1\text{ and }1\leq r\leq s-1=\dim_K(C_\mathbb{X}(d)),\\
r&\text{ if } d\geq 2\text{ and }1\leq r\leq m=\dim_K(C_\mathbb{X}(d)).\\
\end{cases}
$$
\end{theorem}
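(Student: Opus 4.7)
The plan is to mimic the two-step structure of the proof of Theorem~\ref{pepe-vila-2018}, with one crucial modification forced on us by characteristic $2$: the degree-one part of $I(\mathbb{X})$ is now one-dimensional, spanned by $t_1+\cdots+t_s$ (Lemma~\ref{nov23-18}(b)). The case $d\geq 2$ is handled verbatim as in Theorem~\ref{pepe-vila-2018}: the monomials $t_it_j^{d-1}$, as $\{t_i,t_j\}$ ranges over the edges of $G$, evaluate to the standard basis vectors of $K^m$, so ${\rm ev}_d$ is surjective and $\delta_\mathbb{X}(d,r)=r$.

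For $d=1$, the starting observation is that for any edge $e=\{t_i,t_j\}$ and any linear form $h=\sum_k a_k t_k$ one has $h(P_e)=a_i+a_j$, which in characteristic $2$ vanishes iff $a_i=a_j$. Thus given $r$ linear forms $h_1,\ldots,h_r$, the edges on which all of them vanish are precisely those whose endpoints agree under the induced ``coloring'' $V(G)\to K^r$, $t_i\mapsto (a_{1i},\ldots,a_{ri})$.

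For the upper bound $\delta_\mathbb{X}(1,r)\leq\lambda_r(G)$, I would pick a minimum edge set $\{f_1,\ldots,f_\ell\}$ with $\ell=\lambda_r(G)$ whose removal leaves $r+1$ connected components $H_1,\ldots,H_{r+1}$, and set $h_k=\sum_{t_i\in V(H_k)}t_i$ for $k=1,\ldots,r$. A short case check then shows $h_k(P_e)=0$ for every edge $e$ of $H=G\setminus\{f_1,\ldots,f_\ell\}$, while for each removed edge $f_j$ joining $H_a$ and $H_b$ with $a<b\leq r+1$ we have $a\leq r$ and $h_a(P_{f_j})=1\neq 0$. Minimality forces each $f_j$ to be a crossing edge, hence $|\mathbb{X}\setminus V_\mathbb{X}(F)|=\ell$. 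Linear independence modulo $I(\mathbb{X})$ reduces via Lemma~\ref{nov23-18}(b) to checking that the $s\times (r+1)$ matrix whose first $r$ columns are the characteristic vectors of $V(H_1),\ldots,V(H_r)$ and whose last column is $\mathbf{1}$ has rank $r+1$, which is immediate by selecting one vertex from each component to get a triangular submatrix.

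For the reverse inequality, given linearly independent (modulo $I(\mathbb{X})$) linear forms $h_1,\ldots,h_r$ with $\ell=|\mathbb{X}\setminus V_\mathbb{X}(F)|$, let $f_1,\ldots,f_\ell$ be the non-vanishing edges and denote the connected components of $H=G\setminus\{f_1,\ldots,f_\ell\}$ by $H_1,\ldots,H_n$. Each $h_k$ vanishes on every edge of $H$, so by the observation above its coefficient vector is constant on every $V(H_p)$. The $s\times (r+1)$ matrix whose columns are the coefficient vectors of $h_1,\ldots,h_r$ together with $\mathbf{1}$ therefore has at most $n$ distinct rows, hence rank at most $n$; by Lemma~\ref{nov23-18}(b) and the independence hypothesis this rank equals $r+1$, so $n\geq r+1$ and $\ell\geq\lambda_r(G)$. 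The main subtlety throughout is the bookkeeping around ``linearly independent modulo $I(\mathbb{X})$'': in contrast with the char~$\neq 2$ case of Theorem~\ref{pepe-vila-2018}, one must consistently translate this condition into a rank condition on the coefficient matrix augmented by the column $\mathbf{1}$, since otherwise the count of components would be off by one.
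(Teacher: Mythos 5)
Your proposal is correct and follows essentially the same route as the paper: the same forms $h_j=\sum_{t_i\in V(H_j)}t_i$ built from the components of a minimum edge cut for the upper bound, and the same reliance on Lemma~\ref{nov23-18}(b) (that $I(\mathbb{X})_1=K(t_1+\cdots+t_s)$) for both the independence check and the lower bound. The only cosmetic difference is that you run the lower bound as a direct rank count on the coefficient matrix augmented by the all-ones column, whereas the paper derives a contradiction from the relation $\sum_i\overline{g}_i=\overline{0}$ in characteristic $2$; these are the same argument in different packaging.
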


\begin{proof} Assume $d=1$. First we show the inequality
$\delta_\mathbb{X}(1,r)\geq\lambda_r(G)$. We proceed by contradiction
assuming that $\lambda_r(G)>\delta_\mathbb{X}(1,r)$. Then 
$\lambda_r(G)>|\mathbb{X}\setminus V_\mathbb{X}(F)|$ for some 
set $F$ consisting of $r$ linear forms $h_1,\ldots,h_r$ which are
linearly independent modulo $I(\mathbb{X})$. We set
$\ell=|\mathbb{X}\setminus V_\mathbb{X}(F)|$. Let $[P_1],\ldots,[P_\ell]$ be the
points in $\mathbb{X}\setminus V_\mathbb{X}(F)$ and let
$f_1,\ldots,f_\ell$ be the edges of $G$ corresponding
to these points. Consider the graph
$H=G\setminus\{f_1,\ldots,f_\ell\}$ and denote by $H_1,\ldots,H_n$ its
connected components. Since $\lambda_r(G)>\ell$, $H$ cannot have $r+1$
components, that is, $n\leq r$. Let $\mathbb{X}_H$ be the set of 
points corresponding to the columns of the incidence
matrix of $H$. Note that $h_i$ vanishes at all points of 
$\mathbb{X}_H$ for $i=1,\ldots,r$. Indeed, take a point $[P]$ in
$\mathbb{X}_H$, then its corresponding edge $f$ is in $H_k$ for 
some $k$, then $f\neq f_j$ for $j=1,\ldots,\ell$, that is, 
$[P]\notin \mathbb{X}\setminus V_\mathbb{X}(F)$. Thus $h_i(P)=0$. We set 
$g_j=\sum_{t_i\in V(H_j)}t_i$ for $j=1,\ldots,n$. As $h_i\in
I(\mathbb{X}_H)$, by Lemma~\ref{nov23-18}, $h_i$ is a linear combination
of $g_1,\ldots,g_n$ for $i=1,\ldots,r$. Therefore
$$
Kh_1\oplus\cdots\oplus Kh_r\subset Kg_1\oplus\cdots\oplus Kg_n,
$$
and consequently $r\leq n$. Thus $r=n$ and the inclusion above is an
equality. Therefore taking classes modulo $I(\mathbb{X})$, we get
$$
K\overline{h}_1\oplus\cdots\oplus K\overline{h}_r=
K\overline{g}_1\oplus\cdots\oplus K\overline{g}_n.
$$
\quad As $\overline{h}_1,\ldots,\overline{h}_r$ are linearly
independent, so are $\overline{g}_1,\ldots,\overline{g}_n$ 
because $r=n$, a contradiction because by construction of the $g_i$'s 
and since ${\rm char}(K)=2$, 
one has $\sum_{i=1}^n\overline{g}_i=\sum_{i=1}^s\overline{t}_i=\overline{0}$. 

Next we show the inequality $\delta_\mathbb{X}(1,r)\leq\lambda_r(G)$.
It suffices to find a set 
$F=\{h_1,\ldots,h_r\}$ of forms of degree $1$ whose image
$\overline{F}=\{\overline{h}_1,\ldots,\overline{h}_r\}$ in 
$S/I(\mathbb{X})$ is linearly independent over $K$ and 
$\lambda_r(G)=|\mathbb{X}\setminus V_\mathbb{X}(F)|$. We set
$\ell=\lambda_r(G)$. There are edges $f_1,\ldots,f_\ell$ of $G$ such
that the graph
$$
H=G\setminus\{f_1,\ldots,f_\ell\}
$$
has exactly $r+1$ connected components $H_1,\ldots,H_{r+1}$. For
$j=1,\ldots,r$, we set 
$$
h_j=\sum_{t_i\in V(H_j)}t_i.
$$
\quad Note that $h_i$ and $h_j$ have no common variables for $i\neq j$
and any sum of the polynomials $h_1,\ldots,h_r$ is a
linear form in $s-1$ variables. Hence, by Lemma~\ref{nov23-18}(b),
$\overline{F}$ is linearly independent. 

Let $[P_i]$ be the point in $\mathbb{P}^{s-1}$ that corresponds to 
$f_i$ for $i=1,\ldots,\ell$. To complete the proof of the case $d=1$ we need only show 
the equality $\{[P_1],\ldots,[P_\ell]\}=|\mathbb{X}\setminus
V_\mathbb{X}(F)|$. To show the inclusion ``$\subset$'' 
fix an edge $f_k$ with $1\leq k\leq \ell$ and set
$$
G'=G\setminus\{f_1,\ldots,f_{k-1},f_{k+1},\ldots,f_\ell\}. 
$$
\quad Note that $f_k\not\subset V(H_j)$ for $j=1,\ldots,r+1$, otherwise $G'$ 
has $r+1$ components, a contradiction. As a consequence $f_k$ joins
$H_i$ and $H_j$ for some $i<j$. Thus $h_i(P_k)\neq 0$ and
$[P_k]\in\mathbb{X}\setminus V_\mathbb{X}(F)$.

To show the inclusion ``$\supset$'' take 
$[P]\in\mathbb{X}\setminus V_\mathbb{X}(F)$ and denote by $f$ its
corresponding edge in $G$. Then there is $1\leq j\leq r$ such that
$h_j(P)\neq 0$. We proceed by contradiction assuming
$[P]\notin\{[P_1],\ldots,[P_\ell]\}$, that is, $f\neq f_i$ for
$i=1,\ldots\ell$. Then $f$ is an edge of $H$. As ${\rm char}(K)=2$, we
get $h_i(P)=0$ for $i=1,\ldots,r$ by construction of $h_i$, a
contradiction.

If $d\geq 2$, the equality $\delta_\mathbb{X}(d,r)= r$ for 
$1\leq r\leq m=\dim_K(C_\mathbb{X}(d))$ follows from the proof 
of Theorem~\ref{pepe-vila-2018}.
\end{proof}

The next result is a hybrid of Theorems~\ref{pepe-vila-2018} 
and \ref{pepe-vila-2018-char=2} and is characteristic free.

\begin{theorem}\label{pepe-vila-2018-hybrid} Let $G$ be a connected
bipartite graph with $s$ vertices and $m$ edges, let $K$ be a field
of any characteristic, and let
$A$ be the incidence matrix of $G$. If $\mathbb{X}$ is the set of
column vectors of $A$ and $\lambda_r(G)$ is the $r$-th edge
connectivity of $G$, then 
$$ 
\delta_\mathbb{X}(d,r)
=\begin{cases}
\lambda_r(G)&\text{
if }d=1\text{ and }1\leq r\leq s-1=\dim_K(C_\mathbb{X}(d)),\\
r&\text{ if } d\geq 2\text{ and }1\leq r\leq m=\dim_K(C_\mathbb{X}(d)).\\
\end{cases}
$$
\end{theorem}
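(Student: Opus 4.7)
The plan is to adapt the proof of Theorem~\ref{pepe-vila-2018-char=2}, replacing the forms $g_j=\sum_{t_i\in V(H_j)}t_i$ (which work only in characteristic $2$) by signed versions that exploit the bipartite structure. Fix a bipartition $V_1,V_2$ of $G$; for any subgraph $H$ and connected component $H_j$ of $H$, set
$$
g_j=\sum_{t_i\in V_1\cap V(H_j)}t_i-\sum_{t_i\in V_2\cap V(H_j)}t_i.
$$
Two features of these forms drive the proof. First, since each $H_j$ inherits the bipartition of $G$, Lemma~\ref{nov24-18}(a) tells us that every linear form vanishing on the points coming from $H_j$ is a scalar multiple of $g_j$. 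Second, telescoping over the components of $G$ gives $\sum_j g_j=\sum_{t_i\in V_1}t_i-\sum_{t_i\in V_2}t_i\in I(\mathbb{X})$, so $\sum_j\overline{g}_j=\overline{0}$ in $S/I(\mathbb{X})$; this characteristic-free relation plays the role of the identity $\sum_j\overline{g}_j=\sum_i\overline{t}_i=\overline{0}$ used in the proof of Theorem~\ref{pepe-vila-2018-char=2}.

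For $d=1$, I would argue $\delta_\mathbb{X}(1,r)\geq\lambda_r(G)$ by contradiction, exactly as in Theorem~\ref{pepe-vila-2018-char=2}. Suppose $F=\{h_1,\ldots,h_r\}$ has linearly independent classes with $|\mathbb{X}\setminus V_\mathbb{X}(F)|<\lambda_r(G)$; removing the $\ell<\lambda_r(G)$ corresponding edges leaves a graph $H$ with $n\leq r$ components $H_1,\ldots,H_n$. Splitting each $h_i$ along the disjoint vertex sets $V(H_j)$ and applying Lemma~\ref{nov24-18}(a) component-wise shows $h_i\in Kg_1+\cdots+Kg_n$. Since the $\overline{h}_i$ are linearly independent and $\sum_j\overline{g}_j=\overline{0}$, I get $r\leq n-1<r$, a contradiction.

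For the reverse inequality, set $\ell=\lambda_r(G)$ and choose edges $f_1,\ldots,f_\ell$ whose removal produces exactly $r+1$ components $H_1,\ldots,H_{r+1}$. Take $h_j=g_j$ for $j=1,\ldots,r$. Since $h_1,\ldots,h_r$ have pairwise disjoint variable supports contained in a proper subset of $V(G)$, Lemma~\ref{nov24-18}(a) immediately forces the linear independence of their classes. To verify $\{[P_1],\ldots,[P_\ell]\}=\mathbb{X}\setminus V_\mathbb{X}(F)$, the inclusion ``$\subseteq$'' follows because each $f_k$ must join two distinct components $H_i,H_j$ (otherwise $G\setminus\{f_1,\ldots,\widehat{f_k},\ldots,f_\ell\}$ would still have $r+1$ components, contradicting minimality of $\ell$), and the smaller index $i$ is necessarily at most $r$; the signed form $h_i$ then evaluates to $\pm 1$ on $P_k$. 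The inclusion ``$\supseteq$'' splits according to whether the component $H_k$ containing an edge $f$ of $H$ satisfies $k>r$, in which case every $h_j$ vanishes trivially, or $k\leq r$, in which case the bipartite structure of $H_k$ forces the two endpoints of $f$ to lie on opposite sides of the bipartition, so $h_k(P)=0$ while $h_j(P)=0$ for $j\neq k$ by disjointness of supports. The case $d\geq 2$ is identical to the end of Theorem~\ref{pepe-vila-2018}: evaluating $t_it_j^{d-1}$ along edges $\{t_i,t_j\}$ recovers the canonical basis of $K^m$.

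The main obstacle, and the only genuinely new ingredient compared with Theorem~\ref{pepe-vila-2018-char=2}, is setting up the signed forms $g_j$ correctly and justifying the characteristic-free relation $\sum_j\overline{g}_j=\overline{0}$. Once this is in place, the scaffolding of both inequalities transfers essentially verbatim, and the remaining endpoint checks are routine sign-bookkeeping.
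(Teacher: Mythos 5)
Your proof is correct and is essentially the paper's argument: the paper performs the same sign change by replacing $\mathbb{X}$ with the point set $\mathbb{Y}=\{[\mathbf{e}_i-\mathbf{e}_j]\}$ (so that $t_1+\cdots+t_s$ vanishes on $\mathbb{Y}$) and then reruns the proof of Theorem~\ref{pepe-vila-2018-char=2}, whereas you keep $\mathbb{X}$ and put the signs into the component forms $g_j$ via Lemma~\ref{nov24-18}(a) --- the two are the same change of variables viewed from opposite sides, with identical content. Your version is in fact a more explicit writeup of the adaptation the paper only sketches.
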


\begin{proof} Let $V_1$, $V_2$ be the bipartition of $G$. 
Consider the set $\mathbb{Y}$ of all points 
$[\mathbf{e}_i-\mathbf{e}_j]$ in $\mathbb{P}^{s-1}$ such that $\{t_i,t_j\}$ is an edge
of $G$ with $t_i\in V_1$ and $t_j\in V_2$, 
where $\mathbf{e}_i$ is the $i$-th unit vector in $K^s$. Noticing 
that the polynomial $h=t_1+\cdots+t_s$ vanishes at all points of
$\mathbb{Y}$ and the equality $C_\mathbb{X}(1)=C_\mathbb{Y}(1)$,
 the result follows adapting  Lemma~\ref{nov23-18} and the proof of 
Theorem~\ref{pepe-vila-2018-char=2} with $\mathbb{Y}$ playing the role
of $\mathbb{X}$. 
\end{proof}
The main application to coding theory is the following.
\begin{corollary}\label{pepe-vila-2018-coro1} Let $C_p(G)$ be the
incidence matrix code of a connected graph
$G$ with $s$ vertices, $m$ 
edges, $r$-th weak edge biparticity
$\upsilon_r(G)$, $r$-th edge connectivity  
$\lambda_r(G)$, over a finite field $K$ of ${\rm
char}(K)=p$. Then the $r$-th generalized Hamming weight of $C_p(G)$ is given by 
$$ 
\delta_r(C_p(G))=\begin{cases}
\upsilon_r(G)&\text{if } p\neq 2,\, G\textit{ is non-bipartite}, 1\leq r\leq s,\\
\lambda_r(G)&\text{if }p=2,\, 1\leq r\leq s-1,\\
\lambda_r(G)&\text{if }G\textit{ is bipartite}, 1\leq r\leq s-1.
\end{cases}
$$
\end{corollary}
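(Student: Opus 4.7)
The plan is to observe that the incidence-matrix code $C_p(G)$ coincides as a subspace of $K^m$ with the degree-one Reed--Muller-type code $C_\mathbb{X}(1)$, so that the corollary is essentially a translation of Theorems~\ref{pepe-vila-2018}, \ref{pepe-vila-2018-char=2}, and \ref{pepe-vila-2018-hybrid}.

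First I would make the identification $C_\mathbb{X}(1)=C_p(G)$ explicit. The evaluation map sends the generator $t_i\in S_1$ to
$$
{\rm ev}_1(t_i)=(t_i(P_1),\ldots,t_i(P_m))=(a_{i1},\ldots,a_{im}),
$$
which is precisely the $i$-th row of the incidence matrix $A$. Since $t_1,\ldots,t_s$ span $S_1$, the image ${\rm ev}_1(S_1)$ is the row space of $A$, which by definition is $C_p(G)$. In particular the two codes have identical weight hierarchies, so $\delta_r(C_p(G))=\delta_r(C_\mathbb{X}(1))$ for every valid~$r$.

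Next I would invoke the identity $\delta_r(C_\mathbb{X}(d))=\delta_\mathbb{X}(d,r)$ recorded in the introduction (Lemma~4.3(iii) of \cite{rth-footprint}), specialized at $d=1$, to reduce the computation of $\delta_r(C_p(G))$ to that of $\delta_\mathbb{X}(1,r)$. Now the three branches of the corollary are read off directly from the three main theorems: Theorem~\ref{pepe-vila-2018} gives $\delta_\mathbb{X}(1,r)=\upsilon_r(G)$ when $p\neq 2$ and $G$ is non-bipartite, for $1\leq r\leq s$; Theorem~\ref{pepe-vila-2018-char=2} gives $\delta_\mathbb{X}(1,r)=\lambda_r(G)$ when $p=2$, for $1\leq r\leq s-1$; and Theorem~\ref{pepe-vila-2018-hybrid} gives $\delta_\mathbb{X}(1,r)=\lambda_r(G)$ when $G$ is bipartite (regardless of characteristic), for $1\leq r\leq s-1$.

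There is no real obstacle: the combinatorial and algebraic content has already been absorbed into the three main theorems, and what remains is bookkeeping. The only verification worth making is that the range of $r$ declared in each case matches $\dim_K C_p(G)$; this follows from Corollary~\ref{dim-cp}, which records that $\dim_K C_p(G)=s$ in the non-bipartite characteristic-not-two case and $\dim_K C_p(G)=s-1$ in the remaining two cases. Hence the formulas cover every $r$ in $\{1,\ldots,\dim_K C_p(G)\}$, and the corollary is proved.
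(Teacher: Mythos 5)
Your proposal is correct and follows essentially the same route as the paper: identify $C_p(G)$ with $C_\mathbb{X}(1)$ via the evaluation map sending $t_i$ to the $i$-th row of the incidence matrix, apply the equality $\delta_r(C_\mathbb{X}(1))=\delta_\mathbb{X}(1,r)$, and read off the three cases from Theorems~\ref{pepe-vila-2018}, \ref{pepe-vila-2018-char=2}, and \ref{pepe-vila-2018-hybrid}. Your added check that the stated ranges of $r$ match $\dim_K C_p(G)$ via Corollary~\ref{dim-cp} is a small but welcome piece of bookkeeping the paper leaves implicit.
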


\begin{proof} Note that the linear code $C_p(G)$ is the image of $S_1$---the vector space of linear
forms of $S$---under the evaluation map ${\rm ev}_1\colon
S_1\rightarrow K^{m}$, $f\mapsto\left(f(P_1),\ldots,f(P_m)\right)$. 
The image of the linear function $t_i$, under the
map $\text{ev}_1$, gives the $i$-th row of the incidence 
matrix of $G$. This means that $C_p(G)$ is the Reed--Muller-type code
$C_\mathbb{X}(1)$. Hence, the result follows using the equality 
$\delta_\mathbb{X}(1,r) =
\delta_r(C_p(G))$  and Theorems~\ref{pepe-vila-2018},
\ref{pepe-vila-2018-char=2}, and \ref{pepe-vila-2018-hybrid}.
\end{proof}
As a consequence we recover the following result.
\begin{corollary}{\cite[Theorems~1--3]{Dankelmann-Key-Rodrigues}}\label{Dankelmann-etal} 
Let $C_p(G)$ be the incidence matrix code of a connected graph
$G$ with $s$ vertices, $m$ 
edges, weak edge biparticity
$\upsilon(G)$, edge connectivity  
$\lambda(G)$, over a finite field $K$ of ${\rm
char}(K)=p$. Then the minimum distance of $C_p(G)$ is given by 
$$ 
\delta(C_p(G))=\begin{cases}
\upsilon(G)&\text{if } p\neq 2,\, G\textit{ is non-bipartite}, 1\leq r\leq s,\\
\lambda(G)&\text{if }p=2,\, 1\leq r\leq s-1,\\
\lambda(G)&\text{if }G\textit{ is bipartite}, 1\leq r\leq s-1.
\end{cases}
$$
\end{corollary}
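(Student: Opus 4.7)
The plan is to deduce this directly from Corollary~\ref{pepe-vila-2018-coro1} by specializing the parameter $r$ to $1$. First I would recall that, by definition of the weight hierarchy, the first generalized Hamming weight $\delta_1(C)$ of any linear code $C$ coincides with its minimum distance $\delta(C)$: a one-dimensional subspace $D=\langle a\rangle$ of $C$ has support $\chi(D)$ of cardinality equal to the Hamming weight $\omega(a)$, and minimizing over one-dimensional subspaces is the same as minimizing the Hamming weight over nonzero codewords.

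Next I would observe that the two graph invariants appearing in the target formula are, by definition, the $r=1$ instances of the $r$-th weak edge biparticity and the $r$-th edge connectivity introduced in Section~\ref{intro-section}; that is, $\upsilon(G)=\upsilon_1(G)$ and $\lambda(G)=\lambda_1(G)$. The hypotheses of each of the three cases in Corollary~\ref{Dankelmann-etal} match exactly the hypotheses of the corresponding case in Corollary~\ref{pepe-vila-2018-coro1}, and in each case the range $1\le r\le s$ or $1\le r\le s-1$ of allowed indices includes $r=1$ since $G$ is connected and has at least two vertices in the nontrivial cases (and the case $s=1$ is vacuous because then $C_p(G)=0$).

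Consequently, substituting $r=1$ into the formula of Corollary~\ref{pepe-vila-2018-coro1} yields
\[
\delta(C_p(G))=\delta_1(C_p(G))=
\begin{cases}
\upsilon_1(G)=\upsilon(G) & \text{if }p\neq 2\text{ and }G\text{ is non-bipartite},\\
\lambda_1(G)=\lambda(G) & \text{if }p=2,\\
\lambda_1(G)=\lambda(G) & \text{if }G\text{ is bipartite},
\end{cases}
\]
which is exactly the claim. Since this is a direct specialization, there is essentially no obstacle: the real work was done in Theorems~\ref{pepe-vila-2018}, \ref{pepe-vila-2018-char=2} and \ref{pepe-vila-2018-hybrid}, and Corollary~\ref{pepe-vila-2018-coro1} already packaged them in the required coding-theoretic language.
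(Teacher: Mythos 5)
Your proposal is correct and coincides with the paper's own proof, which simply sets $r=1$ in Corollary~\ref{pepe-vila-2018-coro1}; your additional remarks that $\delta_1(C)=\delta(C)$ and $\upsilon_1(G)=\upsilon(G)$, $\lambda_1(G)=\lambda(G)$ just make explicit what the paper leaves implicit.
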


\begin{proof} It follows from Corollary~\ref{pepe-vila-2018-coro1}
making $r=1$.
\end{proof}

\section{Examples}\label{examples-section}
Let $G$ be a connected graph and let $C_p(G)$ be the 
incidence matrix code of $G$ over a finite field $K=\mathbb{F}_q$ of
characteristic $p$. As an application of our main results we can use
\textit{Macaulay}$2$ \cite{mac2}, 
SageMath \cite{sage}, and Wei's duality \cite[Theorem~3]{wei}, to 
compute the weight hierarchy of $C_p(G)$. Hence, by
Corollary~\ref{pepe-vila-2018-coro1}, we can compute 
the corresponding higher weak biparticity and edge connectivity numbers of
the graph. We do not claim, however, to have found an efficient way to
compute the weight hierarchy of an incidence matrix code.

Conversely any algorithm that computes these graph invariants
can be used to compute the weight hierarchy of $C_p(G)$. Using
Proposition~\ref{saturday-afternoon-dec8-18}, we can also compute the
edge biparticity of $G$ using the field of rational numbers.

The weight hierarchy of $C_p(G)$ can also be computed using
a formula of Johnsen and Verdure \cite{JohVer} for the
Hamming weights in terms of the Betti numbers of the
Stanley--Reisner ring whose faces are the independent sets of the
vector matroid of a parity check matrix of $C_p(G)$.

We illustrate how to use our results in practice with some examples.

\begin{example}\label{example-graph1} Let $G$ be the graph of Figure~\ref{graph1}. Recall
that the dimension of $C_p(G)$ is $6$ if $p=3$ and is $5$ if $p=2$ 
(Corollary~\ref{dim-cp}). For use below we denote the 
dual code by $C_p(G)^\perp$.

\begin{figure}[ht]
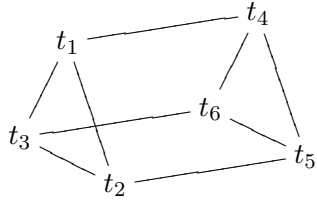

\begin{displaymath}
\xygraph{
 !{<0cm,0cm>;<1cm,0cm>;<0cm,1cm>}
 !{(0,.5)}*+{\text{$t_3$}}="v3"
 !{(.5,1.5)}*+{\text{$t_1$}}="v1"
 !{(1,0)}*+{\text{$t_2$}}="v2"
 !{(2,.8)}*+{\text{$t_6$}}="v6"
 !{(2.5,1.8)}*+{\text{$t_4$}}="v4"
 !{(3,.3)}*+{\text{$t_5$}}="v5"
 "v1"-"v2" "v2"-"v3" "v1"-"v3"
 "v4"-"v5" "v1"-"v4" "v2"-"v5"
 "v3"-@{-}"v6" "v5"-@{-}"v6" "v6"-@{-}"v4"
 }
\end{displaymath}
\caption{Non-bipartite graph $G$.}\label{graph1}
\end{figure}

Using
Procedure~\ref{procedure-example-graph1}, together with, Wei's
duality \cite[Theorem~3]{wei} we obtain Table~\ref{table1} with the weight hierarchy of
$C_p(G)$. The edge biparticity of this graph is $2$, the weak edge
biparticity is $2$, and the edge connectivity is $3$.

\begin{table}[h]
\begin{eqnarray*}
\hspace{-11mm}&&\left.
\begin{array}{c|c|c|c|c|c|c}
r & 1 & 2 & 3& 4&5&6 \\
   \hline
\delta_r(C_2(G))& 3 & 5 & 6 & 8& 9 
 \\ 
  \hline  \delta_r(C_2(G)^\perp)
 &3 &6 &8  &9 & \\ 
   \hline
\delta_r(C_3(G))\,    & 2 & 4 &5  &7 &8 &9
 \\   
\hline
 \delta_r(C_3(G)^\perp) &4& 7& 9 & &\\ 
\end{array}
\right.
\end{eqnarray*}
\caption{Weight hierarchy of $C_p(G)$ for the graph of
Figure~\ref{graph1}.}\label{table1}
\end{table}
\end{example}

\begin{example}\label{example-petersen} Let $G$ be the Petersen graph of
Figure~\ref{petersen}. Recall
that the dimension of $C_p(G)$ (resp. $C_p(G)^\perp$) is $9$ (resp.
$6$) if $p=2$, and the dimension of $C_p(G)$ (resp. $C_p(G)^\perp$)
is $10$ (resp.
$5$) if $p\neq 2$ (Corollary~\ref{dim-cp}). 
\begin{figure}[ht]
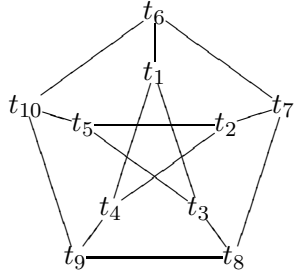

$$
 \xygraph{
 !{<0cm,0cm>:<0cm,1cm>:<-1cm,0cm>::}
 !{(0,0);a(0)**{}?(1)}*{\text{$t_1$}}="a1"
 !{(0,0);a(72)**{}?(1)}*{\text{$t_5$}}="a2"
 !{(0,0);a(144)**{}?(1)}*{\text{$t_4$}}="a3"
 !{(0,0);a(216)**{}?(1)}*{\text{$t_3$}}="a4"
 !{(0,0);a(288)**{}?(1)}*{\text{$t_2$}}="a5"
 !{(0,0);a(0)**{}?(1.8)}*{\text{$t_6$}}="b1"
 !{(0,0);a(72)**{}?(1.8)}*{\text{$t_{10}$}}="b2"
 !{(0,0);a(144)**{}?(1.8)}*{\text{$t_9$}}="b3"
 !{(0,0);a(216)**{}?(1.8)}*{\text{$t_8$}}="b4"
 !{(0,0);a(288)**{}?(1.8)}*{\text{$t_7$}}="b5"
 "a1"-"a3" "a3"-"a5" "a5"-"a2" "a2"-"a4" "a4"-"a1"
 "b1"-"b2" "b2"-"b3" "b3"-"b4" "b4"-"b5" "b5"-"b1"
 "a1"-"b1" "a2"-"b2" "a3"-"b3" "a4"-"b4" "a5"-"b5"
 }
$$
\caption{Petersen graph $G$.}\label{petersen}
\end{figure}

Using
Procedure~\ref{procedure-example-petersen}, together with, Wei's
duality \cite[Theorem~3]{wei} we obtain Table~\ref{table2} describing the weight hierarchy of
$C_p(G)$. The edge biparticity, the weak edge
biparticity, and the edge connectivity of the Petersen graph are
equal to $3$.

\begin{table}[h]
\begin{eqnarray*}
\hspace{-11mm}&&\left.
\begin{array}{c|c|c|c|c|c|c|c|c|c|c}
r & 1 & 2 & 3& 4&5&6&7&8&9&10 \\
   \hline
\delta_r(C_2(G))& 3 & 5 & 7 & 9& 10&12 &13 &14 &15
 \\   
\hline  \delta_r(C_2(G)^\perp)
 &5 &8 &10  &12 &14 &15 & & & &\\
   \hline
\delta_r(C_3(G))& 3 & 5 & 7 & 8& 9&11 &12 &13 &14&15\\
\hline  \delta_r(C_3(G)^\perp)
 &6 &10 &12  &14 &15 & & & & &\\ 
\end{array}
\right.
\end{eqnarray*}
\caption{Weight hierarchy of $C_p(G)$ for the graph of
Figure~\ref{petersen}.}\label{table2}
\end{table}
\end{example}

\section{Procedures for Macaulay2 and
SageMath}\label{procedures-section}

\begin{procedure}\label{procedure-example-graph1}
Computing the weight hierarchies using 
{\it Macaulay\/}$2$ \cite{mac2}, SageMath \cite{sage}, and 
Wei's duality \cite{wei}. This procedure
corresponds to Example~\ref{example-graph1}. It could be 
applied to any connected graph $G$ to obtain the generalized Hamming weights 
of $C_p(G)$. The next procedure for {\it Macaulay\/}$2$ uses 
the algorithms of \cite{rth-footprint} to 
compute generalized minimum distance functions.

\begin{verbatim}
--Procedure for Macaulay2
input "points.m2"
q=3, R = ZZ/q[t1,t2,t3,t4,t5,t6]--p=char(K)=3
A = transpose(matrix{{1,1,0,0,0,0},{0,1,1,0,0,0},{1,0,1,0,0,0},
{0,0,0,1,1,0},{0,0,0,0,1,1},{0,0,0,1,0,1},{1,0,0,1,0,0},
{0,1,0,0,0,1},{0,0,1,0,1,0}})  
I=ideal(projectivePointsByIntersection(A,R)), M=coker gens gb I
genmd=(d,r)->degree M-max apply(apply(subsets(apply(apply(apply
(toList (set(0..q-1))^**(hilbertFunction(d,M))
-(set{0})^**(hilbertFunction(d,M)),toList),x->basis(d,M)*vector x),
z->ideal(flatten entries z)),r),ideal),x-> if #set flatten entries 
mingens ideal(leadTerm gens x)==r and not quotient(I,x)==I 
then degree(I+x) else 0)
--The following are the first two generalized Hamming weights
genmd(1,1), genmd(1,2)
\end{verbatim}

\begin{verbatim}
#Procedure for SageMath
A = transpose(matrix(GF(3),[[1,1,0,0,0,0],[0,1,1,0,0,0],[1,0,1,0,0,0],
[0,0,0,1,1,0],[0,0,0,0,1,1],[0,0,0,1,0,1],[1,0,0,1,0,0],[0,1,0,0,0,1],
[0,0,1,0,1,0]]))
C = codes.LinearCode(A)
C.parity_check_matrix()
C.generator_matrix()
#the next line Gives the minimum distance of the dual code
C.dual_code().minimum_distance()
\end{verbatim}
\end{procedure}

\begin{procedure}\label{procedure-example-petersen}
Computing the weight hierarchies and the edge biparticity using 
{\it Macaulay\/}$2$ \cite{mac2}, SageMath \cite{sage}, and 
Wei's duality \cite{wei}. This procedure
corresponds to Example~\ref{example-petersen}. The 
next procedure for {\it Macaulay\/}$2$ uses 
the algorithms of \cite{rth-footprint} to 
compute generalized footprint functions. The footprint gives 
easy to compute lower bounds for the generalized Hamming weights of
projective Reed--Muller-type codes. 
\begin{verbatim}
--Procedure for Macaulay2 for Petersen graph
input "points.m2"
R = QQ[t1,t2,t3,t4,t5,t6,t7,t8,t9,t10]
--Incidence matrix to compute the edge biparticity
A = transpose matrix{{1,1,0,0,0,0,0,0,0,0},{0,1,1,0,0,0,0,0,0,0},
{0,0,1,1,0,0,0,0,0,0},{0,0,0,1,1,0,0,0,0,0},{1,0,0,0,1,0,0,0,0,0},
{1,0,0,0,0,1,0,0,0,0},{0,1,0,0,0,0,1,0,0,0},{0,0,1,0,0,0,0,1,0,0},
{0,0,0,1,0,0,0,0,1,0},{0,0,0,0,1,0,0,0,0,1},{0,0,0,0,0,1,0,1,0,0},
{0,0,0,0,0,0,0,1,0,1},{0,0,0,0,0,0,1,0,0,1},{0,0,0,0,0,0,1,0,1,0},
{0,0,0,0,0,1,0,0,1,0}}
q=2, R = ZZ/q[t1,t2,t3,t4,t5,t6,t7,t8,t9]
--Generator matrix computed with Sage to find Hamming weights.
A1=matrix({{1,0,0,0,1,0,0,0,0,0,1,0,0,0,1},
{0,1,0,0,1,0,0,0,0,0,1,0,1,1,1},{0,0,1,0,1,0,0,0,0,0,0,1,1,1,1},
{0,0,0,1,1,0,0,0,0,0,0,1,1,0,0},{0,0,0,0,0,1,0,0,0,0,1,0,0,0,1},
{0,0,0,0,0,0,1,0,0,0,0,0,1,1,0},{0,0,0,0,0,0,0,1,0,0,1,1,0,0,0},
{0,0,0,0,0,0,0,0,1,0,0,0,0,1,1},{0,0,0,0,0,0,0,0,0,1,0,1,1,0,0}})
q=2, R = ZZ/q[t1,t2,t3,t4,t5,t6]
--Parity check matrix computed with Sage to find 
--the Hamming weights of dual code
A2=matrix({{1,0,0,0,0,1,1,0,0,0,0,0,0,1,1},
{0,1,0,0,0,0,1,1,0,0,0,1,1,0,0},{0,0,1,0,0,0,0,1,1,0,0,1,1,1,0},
{0,0,0,1,0,0,0,0,1,1,0,0,1,1,0},{0,0,0,0,1,1,0,0,0,1,0,0,1,1,1},
{0,0,0,0,0,0,0,0,0,0,1,1,1,1,1}})
--The following functions can be applied to A, A1, A2
I=ideal(projectivePointsByIntersection(A,R)), M=coker gens gb I
--This function computes the edge biparticity of Petersen graph 
--using the incidence matrix over the rational numbers
genmd1=(d,r)->degree M-max apply(apply(subsets(apply(apply(apply
(toList (set(1,-1))^**(hilbertFunction(d,M))
-(set{0})^**(hilbertFunction(d,M)),toList),x->basis(d,M)*vector x),
z->ideal(flatten entries z)),r),ideal),x-> if #set flatten entries 
mingens ideal(leadTerm gens x)==r and not quotient(I,x)==I 
then degree(I+x) else 0)
--To compute the r-th Hamming weight of the dual code 
--use genmd(1,r) of the previous procedure:
genmd(1,1),genmd(1,2),genmd(1,3),genmd(1,4),genmd(1,5)
--To compute the edge biparticity use genmd1(1,1)
init=ideal(leadTerm gens gb I), degree M
er=(x)-> if not quotient(init,x)==init then degree ideal(init,x) else 0
--This is the footprint function
fpr=(d,r)->degree M - max apply(apply(apply(subsets(flatten 
entries basis(d,M),r),toSequence),ideal),er)
--To find lower bounds for Hamming weights use the footprint:
fpr(1,1),fpr(1,2),fpr(1,3),fpr(1,4),fpr(1,5),fpr(1,6),fpr(1,7),fpr(1,8)
\end{verbatim}
\end{procedure}

\section*{Acknowledgments.} 
We thank the referees for a careful
reading of the paper and for the improvements suggested. 
Computations with \textit{Macaulay}$2$ \cite{mac2} and Sage \cite{sage}
were important to verifying examples given in this paper. 

\bibliographystyle{plain}

\end{document}